\documentclass[letterpaper, 10 pt, conference]{ieeeconf}  

\IEEEoverridecommandlockouts                              

\usepackage{amsmath,mathrsfs,amsfonts,amssymb,graphicx,epsfig} 
\usepackage{cite,url,bm,balance,physics}
\usepackage{stmaryrd,mathtools}
\makeatletter
\let\NAT@parse\undefined
\makeatother

\usepackage[dvipsnames]{xcolor}
\usepackage{hyperref}
\usepackage{subcaption}
\usepackage{algorithm}
\usepackage{algpseudocode}

\definecolor{pinegreen}{rgb}{0.0, 0.47, 0.44}

\hypersetup{
    colorlinks=true,
    linkcolor=blue,
    filecolor=magenta,      
    urlcolor=blue,
    citecolor= teal,
    pdftitle={Overleaf Example},
    pdfpagemode=FullScreen,
    }

    \usepackage{etoolbox} 

\newtheorem{theorem}{Theorem}

\newtheorem{proposition}{Proposition}

\title{\LARGE \bf
Second-Order Mean-Field Schr\"odinger Bridges\\with Partial State Information
}

\author{Asmaa Eldesoukey, Md Zulfiqur Haider, Italo Napolitano, Yongxin Chen, and Abhishek Halder
\thanks{Asmaa Eldesoukey and Abhishek Halder are with the Department of Aerospace Engineering, Iowa State University, Ames, IA 50011, USA, {\tt\footnotesize{\{{asmaae},ahalder\}@iastate.edu}.}
Md Zulfiqur Haider is with the Department of Mathematics, Iowa State University, Ames, IA 50011, USA, {\tt\footnotesize{zulfiqur@iastate.edu}}.
Italo Napolitano is with the Modeling and Engineering Risk and Complexity Department, Scuola
Superiore Meridionale, via Mezzocannone 4, 80134, Naples, Italy,  {\tt\footnotesize{i.napolitano@ssmeridionale.it}}.
Yongxin Chen is with the School of Aerospace Engineering, Georgia Institute of Technology, Atlanta, GA 30332, USA, {\tt\footnotesize{ychen3148@gatech.edu}}. This research was partially supported by NSF awards 2111688, 2450377, 2450378.}
}

\begin{document}

\maketitle
\thispagestyle{plain}
\pagestyle{plain}

\begin{abstract}
Motivated by scenarios in control systems where decision making is informed by only partial information about the states, we introduce a novel formulation of the Schr\"odinger bridge problem for \emph{inertial interacting-agent} systems. Within our setting, the agent ensemble is steered to match partial specifications of the collective configuration. Our analysis pertains to a mean-field framework in which the collective dynamics are described by a nonlinear Vlasov--Fokker--Planck integro-PDE with \emph{nonlocal} interactions. We assume that only distributions over positions are specified at the endpoints of a finite time horizon, representing aggregate population snapshots, while the corresponding velocity degrees of freedom remain unspecified. We derive the associated optimality system, identifying the optimal controlled evolution as jointly informed by the initial and terminal specifications through forward and backward propagation in time. We also show that the partial endpoint specifications bring about distinct conditions on the optimal control and the consistent evolution. Subsequently, to compute solutions to the optimality system, we develop a nested fixed-point algorithm inspired by the classical Fortet--Sinkhorn iteration, but extended to handle the nonlinearities arising from nonlocal interactions and temporal coupling. 

\end{abstract}


\section{INTRODUCTION}
Interacting-agent systems are common in a broad spectrum of control engineering applications, ranging from autonomous swarms \cite{annurev2022,An10023955} to cyber-physical infrastructures \cite{deng9023559} and to synthetic active matter \cite{bauerle2020formation}. In such complex systems, emergent macroscopic phenomena arise from pairwise interactions among constituent agents \cite{coraggio2026controlling}. Indeed, interaction mechanisms can effect desired collective behaviors, yet they may not align with finite-horizon control objectives. For instance, repulsive interactions promote collision avoidance in robotic swarms; however, they can hinder the control requirement to assemble within a target region. Thus, a natural control problem is to steer an agent ensemble toward a desired aggregate configuration while accounting for the interaction mechanisms integral to the collective dynamics.

The aim of this work is to design collective control policies guided by \emph{partial}, rather than full statistical knowledge of system states. Such a perspective is motivated by scenarios in which comprehensive information about interacting populations, especially those of large scale, may not be available, and only partial information can be gleaned from aggregate observations. Specifically, we consider the problem of finite-horizon steering of \emph{interacting inertial} agents subject to stochastic disturbances, in which only the \emph{spatial} probability distributions are specified at the initial and terminal times. The corresponding velocity distributions are left unspecified and are to be determined as part of the controlled evolution. We focus on dynamics in the \emph{mean-field limit}, where the system evolves according to a nonlinear Vlasov--Fokker--Planck equation with nonlocal interactions \cite{guillin2021kinetic}.

Our formulation is rooted in the theory of Schr\"odinger bridges that traces back to E.~Schr\"odinger's articles \cite{schrodinger1931,schrodinger1932}. Concerned with particle systems, the classical Schr\"odinger bridge problem seeks the most likely paths that the particles traverse between given initial and final probability distributions. From a stochastic control standpoint, the Schr\"odinger bridge identifies this intermediate stochastic evolution via a minimum-energy update of the underlying system dynamics \cite{daipra1991new}. Specifically, the update is an added drift vector field that is viewed as the control input required to steer the particles between the specified marginals.

\noindent{\textbf{Related works.}} The Schr\"odinger bridge problem with spatial endpoint constraints for \emph{non-interacting} second-order dynamics has been studied in \cite{chen2019multi,chiarini2022entropic}. Mean-field Schr\"odinger bridges have also been considered for interacting populations with \emph{first-order} dynamics (i.e., kinematics) in \cite{backhoff2020mean}. Numerical methods for addressing these first-order mean-field bridge problems have been developed in \cite{chen2024density,eldesoukey2026generalized}. Minimum-energy control for \emph{finite collections} of second-order stochastic oscillators has been explored in \cite{nodozi2022schrodinger}, but the mean-field limit formulation has not been considered. For broad classes of stochastic interacting systems, the works \cite{fischer2014form,chaintron2024quasi} showed, with different degrees of generality, that the required cost to identify the most likely evolution is a relative entropy functional and provided the equivalent stochastic control cost.

However, those works did not address the endpoint-constrained Schr\"odinger bridge problem for interacting kinetic systems, a direction the present authors recently initiated for Cucker--Smale and Morse swarming models in \cite{eldesoukey2026schr}. The present work builds on this, extending the formulation to systems under \emph{general} interactions. 


\noindent{\textbf{Contributions.}} We formulate the second-order mean-field Schr\"odinger bridge problem with partial state knowledge at the temporal endpoints and analyze the structure of its solutions. We explain how the resulting optimal controller and the corresponding evolution exhibit features distinct from those of the standard, fully informed setting. We show that the controlled dynamics can be recovered from a nonlinear system of nonlocally and temporally coupled forward-backward integro-PDEs. We further develop an extension to the Fortet--Sinkhorn algorithm to calculate solutions to this system. 


\noindent{\textbf{Organization.}} In Section~\ref{sec:problem}, we lay out the main problem. In Section~\ref{sec:solution-structure}, we derive the corresponding solution structure from the optimality conditions dictated by the partial marginal constraints. In Section~\ref{sec:algorithm}, we propose a nested Fortet--Sinkhorn-type algorithm to compute solutions to the optimality conditions numerically. Before concluding, we present a numerical example in Section~\ref{sec:Ex}.

\section{Main Problem}\label{sec:problem}
\subsection{Setting}

We consider $N$ inertial particles (agents), each of which follows the It\^o diffusion
\begin{subequations}\label{eq:finite-interacting}
    \begin{align}
    {\rm d} X_t^{i} &= V_t^i \, {\rm d} t, \\
  \!\!  {\rm d}V_t^{i} &= \frac{1}{N} \sum_{j =1,j\neq i}^N \!\! \Psi(X_t^i -X_t^j, V_t^i -V_t^j) \, {\rm d} t + \sigma \, {\rm d} W_t^i,
\end{align}
\end{subequations}
where $X_t^i, V_t^i \in \mathbb{R}^d$ respectively denote the position and velocity of the $i$-th agent, $i \in \{1, \ldots, N\}$, and $t \in [0,T]$. The symbol $\Psi$ denotes a known function
 $$\Psi: \mathbb{R}^{2d} \to \mathbb{R}^d,$$
representing pairwise interactions among the agents. We assume that $\Psi$ is globally Lipschitz continuous, ensuring \cite[Assumption~A.1]{chaintron2024quasi} and \cite[Conditions~(L),(G)]{fischer2014form} on the drift.
In addition, $W_t^1, \ldots, W_t^N$ are independent $d$-dimensional Wiener processes, and, for simplicity, we take the noise intensity $\sigma$ to be a positive constant, uniform across agents.

One can describe the collective state of the ensemble of agents following \eqref{eq:finite-interacting} via the instantaneous empirical distribution
\begin{align*}
   p_t^N:=\frac{1}{N} \sum_{i=1}^N  \delta_{(X_t^i,V_t^i)},
\end{align*}
where $\delta_{(X_t^i,V_t^i)}$ is the Dirac measure centered at $(X_t^i,V_t^i)$.
In our exposition, we shall focus on interacting models as in \eqref{eq:finite-interacting} in the \emph{mean-field limit}, where the number of agents $N$ tends to infinity and interactions take place via averaged effects \cite{bolley2011stochastic}. 

Within the mean-field regime, the empirical measure converges to a deterministic phase-space distribution, say $p_t$, and the mean-field dynamics corresponding to \eqref{eq:finite-interacting} become
\begin{subequations}\label{eq:general-interacting kinetics}
    \begin{align}
      \begin{bmatrix}
      {\rm d} X_t \\
      {\rm d} V_t 
  \end{bmatrix} = \begin{bmatrix} V_t \\ b_t [p_t] \end{bmatrix}  \, {\rm d} t 
  + \begin{bmatrix} 
    \bm 0_d \\ \sigma I_d
\end{bmatrix}  {\rm d} W_t,\label{eq:general-interacting kinetics-1}
\end{align} 
where $\bm 0_d$ is the $d \times d$ zero matrix and $I_d$ is the $d$-dimensional identity matrix. In \eqref{eq:general-interacting kinetics-1}, $b_t[p_t]$ denotes the mean-field force that is given by 
\begin{align} 
    b_t[p_t](x,v)
    & =  \int_{\mathbb R^{2d}} \Psi(x- \tilde{x},v - \tilde{v}) p_t(\tilde{x},\tilde{v}) \, {\rm d} \tilde{x} \, {\rm d} \tilde{v}, \label{eq:convolution-expanded}
\end{align}
\end{subequations}
where, with a slight abuse of notation, $p_t$ also denotes the instantaneous probability density function (PDF) of the phase-space distribution.

The mean-field force $b_t[p_t]$ is called \emph{nonlocal} as it depends on the phase-space density of the collective. Notice also that the system in \eqref{eq:general-interacting kinetics} can be viewed as a McKean--Vlasov process \cite{PASCUCCI2026104978} on the phase space but with a degenerate noise intensity matrix.

In this work, we assume that information about the system described in \eqref{eq:general-interacting kinetics} is available as spatial snapshots of the particle ensemble. That is, we are given two PDFs $q_0$ and $q_T$, supported on subsets of $\mathbb R^d$, that represent the ensemble's spatial distribution at $t=0$ and $t=T$, respectively. These specifications amount to marginal probabilistic constraints: \begin{subequations} \label{eq:partial-marginals}
\begin{align}
  \int_{\mathbb R^d} p_0(x,v) \, {\rm d}v  = q_0 (x), \\
  \int_{\mathbb R^d} p_T(x,v) \, {\rm d}v  = q_T (x),
\end{align}
\end{subequations}
for almost every (a.e.) $x \in \mathbb R^d$.
Unlike the standard Schr\"odinger bridge problem, which entails full state distributions at the endpoints, the constraints in \eqref{eq:partial-marginals} provide only \emph{partial} information about the collective state, since the velocity degrees of freedom remain unspecified at $t\in \{0,T\}$.

Within the present setting, we consider a variation of the standard Schr\"odinger bridge problem in which the system dynamics in \eqref{eq:general-interacting kinetics} define a prior evolution of the particle ensemble that may not be consistent with the specifications in \eqref{eq:partial-marginals}. The problem objective is to determine the most likely update to the prior dynamics that is consistent with the spatial marginal constraints.

Specifically, let the space of sample paths of the process in \eqref{eq:general-interacting kinetics} be
\begin{align*}
    \Omega := \mathcal C([0,T]; \mathbb R^{2d}),
\end{align*}
i.e., the space of continuous functions on the indicated time window, and let $\mathcal M(\Omega)$ denote the set of probability measures on $\Omega$ (path measures). If $\Gamma\in\mathcal M(\Omega)$ denotes the path measure associated with the dynamics in \eqref{eq:general-interacting kinetics}, then our bridge problem seeks the most likely path measure among those in $\mathcal M(\Omega)$ that satisfy the marginal constraints in \eqref{eq:partial-marginals}.

However, a subtlety arises due to the nonlocal interactions: the probability density $p_t$ appears in the prior dynamics, and as a result, the prior path measure $\Gamma$ is not independent of any candidate update $P \in \mathcal M({\Omega})$ whose marginal flow coincides with $t \mapsto p_t$. Therefore, $\Gamma = \Gamma(P)$ (see, e.g., \cite{backhoff2020mean}). 

Under the Lipschitz condition on $\Psi$, and varying assumptions on the initial distribution of the ensemble, \cite[Theorem~2.9]{chaintron2024quasi} and \cite[Theorem~5.2]{fischer2014form} indicate that the relevant large-deviation rate function is given by the Kullback--Leibler divergence:
\begin{align}\label{eq:KL}
    {\rm KL}(P,\Gamma(P)) := \int_{\Omega} {\rm d} P \log \frac{{\rm d}P}{{\rm d} \Gamma(P)},
\end{align}
when $P$ is absolutely continuous with respect to $\Gamma(P)$ (a relation written as $P \ll \Gamma (P)$).
This rate function quantifies the exponential decay of the likelihood of empirical path measures approaching a law $P$. As such, a smaller rate value incurs a higher likelihood.
Thus, among all path measures compatible with \eqref{eq:partial-marginals}, the most likely one minimizes \eqref{eq:KL}. 

\subsection{Problem Formalism}
In line with the control formulation of Schr\"odinger bridges \cite{daipra1991new}, we can view the prior model in \eqref{eq:general-interacting kinetics} as representing uncontrolled dynamics. Updating the prior dynamics and, in turn, the prior measure $\Gamma$ can then be achieved via a control vector field that guides the ensemble between the spatial marginals $q_0$ and $q_T$. Within this context, the controlled (updated) mean-field system reads
    \begin{align} \label{eq:controlled-dynamics}
      \begin{bmatrix}
      {\rm d} X_t \\
      {\rm d} V_t 
  \end{bmatrix} = \begin{bmatrix} V_t \\ b_t [p_t] + u_t\end{bmatrix}  \, {\rm d} t 
  + \begin{bmatrix} 
    \bm 0_d \\ \sigma I_d
\end{bmatrix}  {\rm d} W_t,
\end{align}
where $u_t:\mathbb R^{2d}\to\mathbb R^d$ is the time-dependent control vector field, evaluated at the state as $u_t(X_t,V_t)$. Equivalently, we can describe the collective dynamics in \eqref{eq:controlled-dynamics} by the Vlasov--Fokker--Planck equation
\begin{align*}
     \partial_t p_t + \big \langle v &, \nabla_x p_t \big \rangle   \\
     &+\nabla_v  \cdot  \Big(p_t  \big(b_t[p_t] + u_t\big) \Big)  =  \frac{\sigma^2}{2}\Delta_v p_t,
\end{align*}
where $\langle \cdot, \cdot \rangle$ denotes the Euclidean inner product, $\nabla_x$ the gradient with respect to position, $\nabla_v \cdot (\cdot)$ the divergence with respect to velocity, and $\Delta_v$ the Laplacian with respect to velocity. 

By applying Girsanov's theorem \cite[Chapter~8]{oksendal2003stochastic} to \eqref{eq:controlled-dynamics} and \eqref{eq:general-interacting kinetics}, one can verify that  (cf. \cite[Lemma~B.1]{fischer2014form}, \cite[Lemma~B.1]{chaintron2024quasi})
\begin{align}\label{eq:KL-KE-equiv-2}
    {\rm KL}(P, &\Gamma(P)) =  {\rm KL}(p_0, \gamma_0) \nonumber\\
    & + \frac{1}{2\sigma^2} \int_0^T \!\!\! \int_{\mathbb R^{2d}} \Vert u_t(x,v)\Vert^2 p_t (x,v) \, {\rm d} x \, {\rm d} v \, {\rm d}t,
\end{align}
where $\gamma_0$ denotes the prior's initial phase-space distribution. Based on \eqref{eq:KL-KE-equiv-2}, we can state our Schr\"odinger bridge problem as follows.
\vspace{4pt}

Given a prior initial phase-space probability distribution $\gamma_0$ and two PDFs $q_0$ and $q_T$ supported on subsets of $\mathbb R^d$,
    \begin{subequations}\label{prob:main}
    \begin{align}
 \!\!\!   \!   \underset{(u_\cdot, p_\cdot) \text{ with } p_0 \ll \gamma_0}{{\text{minimize}}} & \   {\rm KL}(p_0,\gamma_0) + \frac{1}{2\sigma^2} \int_0^T \!\!\! \int_{\mathbb R^{2d}} \Vert u_t\Vert^2 p_t  \, {\rm d} x \, {\rm d} v \, {\rm d}t 
    \end{align}
    subject to
    \begin{align}
         \partial_t p_t + \big \langle v &, \nabla_x p_t \big \rangle  \nonumber \\
     &+\nabla_v  \cdot  \Big(p_t  \big(b_t[p_t] + u_t\big) \Big)  =  \frac{\sigma^2}{2}\Delta_v p_t, \label{eq:VFP-controlled} \\
    \int_{\mathbb R^d} & p_0(\cdot,v)  \, {\rm d}v  = q_0 (\cdot), \label{eq:BC1} \\
  \int_{\mathbb R^d} & p_T(\cdot,v) \, {\rm d}v  = q_T (\cdot). \label{eq:BC2}
    \end{align}
    \end{subequations}

\section{Structure of Solutions}\label{sec:solution-structure}

We note that Problem~\eqref{prob:main} is generally non-convex, and the task of establishing the existence of minimizers is nontrivial, which suggests this direction for future investigation. Here, we focus on identifying the structural properties of its solution. 
To this end, we examine the endpoint constraints \eqref{eq:BC1} and \eqref{eq:BC2} more closely.
Let ${\rm Dens}(\mathbb R^{m})$ represent the space of PDFs on $\mathbb R^{m}, m>0$, which are, by definition, integrable functions. For $q \in {\rm Dens}(\mathbb R^d)$, define the surface 
\begin{align}\label{eq:surface}
    &\mathcal S(q) =  \nonumber \\
    & \Big\{ p \in {\rm Dens}(\mathbb R^{2d})~ \Big\vert~ \!\!\!\int_{\mathbb R^{d}}  p(x,v) \, {\rm d} v = q (x)  \text{ for a.e. } x \in \mathbb R^d \Big\}.
\end{align}
Then,  Eqs.~\eqref{eq:BC1} and \eqref{eq:BC2} can be expressed as 
\begin{align} \label{eq:surface-p0-pf}
   p_0  \in  \mathcal S(q_0), \quad  p_T \in \mathcal S(q_T).
   \end{align}

Further, the Lagrangian of Problem~\eqref{prob:main} can be written as
\begin{align}
    \mathcal L &=  {\rm KL}(p_0,\gamma_0) +  \int_0^T \!\!\! \int_{\mathbb R^{2d}} \Big\{\frac{1}{2\sigma^2} \Vert u_t\Vert^2 p_t \nonumber \\
    & \quad + \Lambda_t \Big[ \partial_t p_t + \big \langle v , \nabla_x p_t \big \rangle   \nonumber \\
     & \quad+ \nabla_v  \cdot  \big( p_t ( b_t[p_t] +  u_t)  \big)  -  \frac{\sigma^2}{2}\Delta_v p_t\Big] \Big\} \, {\rm d} x \, {\rm d} v \, {\rm d}t,\label{eq:Lagrangian1}
\end{align} %
where $\Lambda_t= \Lambda_t(x,v)$ is a Lagrange multiplier (costate) that enforces the dynamics in \eqref{eq:VFP-controlled}, and $t \mapsto p_t$ is admissible if \eqref{eq:surface-p0-pf} holds. Throughout, we assume sufficient regularity of $u_\cdot, p_\cdot, \Lambda_\cdot$ that allows our following variational analysis. 

Using integration by parts, we rewrite \eqref{eq:Lagrangian1} as
\begin{align}\label{eq:Lagrangian2} 
    \mathcal L &=  {\rm KL}(p_0,\gamma_0) + \int_0^T \!\!\! \int_{\mathbb R^{2d}}  \Big\{ \frac{1}{2\sigma^2} \Vert u_t\Vert^2 p_t \nonumber \\
    & \quad - p_t \Big[  \partial_t \Lambda_t +  \big \langle v , \nabla_x \Lambda_t \big \rangle   \nonumber \\
     & \quad +  \big \langle   b_t[p_t] +  u_t, \nabla_v \Lambda_t  \big\rangle   +  \frac{\sigma^2}{2}\Delta_v \Lambda_t \Big] \Big\} \, {\rm d} x \, {\rm d} v \, {\rm d}t, \nonumber \\
     & \quad + \int_{\mathbb R^{2d}} \Big(p_T \, \Lambda_T - p_0 \, \Lambda_0 \Big) \,  {\rm d} x \, {\rm d} v,
\end{align} 
where other boundary terms vanish when we consider $p_t \to 0$ sufficiently fast as $ \Vert x \Vert, \Vert v \Vert \to \infty$.

Having introduced the constraint surface in \eqref{eq:surface} and the Lagrangian in \eqref{eq:Lagrangian2}, we are ready to state our first result.

\begin{proposition}[Endpoint Costate Structure]\label{proposition:endpoint-conditions} Let the minimizer of Problem~\eqref{prob:main} be denoted by $[0,T]\ni t \mapsto (u_t^\star, p_t^\star)$.
   Also let $\gamma_0({\rm d}x, {\rm d}v) = \gamma_0(x,v)\, {\rm d}x \, {\rm d}v$ and
   \begin{align} \label{eq:quanti1}
     \xi_0(x,v) := 1 - \Lambda_0(x,v) + \log \frac{p_0^\star(x,v)}{\gamma_0(x,v)},
   \end{align} 
   for $p_0^\star(x,v) >0$.
Then, whenever the minimizer exists, 
   \begin{align} 
       \xi_0(x,v) &= \xi_0(x)  \label{eq:xi_0}
   \end{align} 
   for a.e. $(x,v) \in \mathbb R^{2d}$ at which $p_0^\star(x,v)>0$, and
   \begin{align}
       \Lambda_T(x,v) &= \Lambda_T(x), \label{eq:Lambda_T}
   \end{align} for a.e.  $(x,v) \in \mathbb R^{2d}$. That is, $\xi_0, \Lambda_T$ are independent of the velocity $v$ almost everywhere in their respective domains.
\end{proposition}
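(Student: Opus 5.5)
We will derive both identities from the first-order stationarity of the Lagrangian \eqref{eq:Lagrangian2} with respect to the endpoint densities $p_0$ and $p_T$. The variations are restricted to directions tangent to the constraint surfaces in \eqref{eq:surface-p0-pf}. The key observation is that the tangent space of $\mathcal S(q)$ at any point consists of perturbations $\delta p$ with $\int_{\mathbb R^d}\delta p(x,v)\,{\rm d}v = 0$ for a.e.\ $x$. A linear functional $\int \phi\,\delta p\,{\rm d}x\,{\rm d}v$ vanishes on all such $\delta p$ exactly when $\phi$ is independent of $v$. So the proof reduces to computing the endpoint first variations of $\mathcal L$ and then applying this ``annihilator'' characterization.

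\emph{Step 1 (terminal condition).} We perturb $p_T^\star \mapsto p_T^\star+\varepsilon\,\delta p_T$, keeping the interior trajectory and control fixed. Since $p_T$ enters \eqref{eq:Lagrangian2} only through the boundary term, the first variation is
\[
\frac{{\rm d}}{{\rm d}\varepsilon}\mathcal L\Big|_{\varepsilon=0}=\int_{\mathbb R^{2d}}\Lambda_T\,\delta p_T\,{\rm d}x\,{\rm d}v .
\]
Here we treat the time integral as insensitive to an endpoint perturbation, which is the standard formal step in this type of variational analysis. Optimality requires this to vanish for all $\delta p_T$ with zero $v$-marginal. We will use perturbations of the form $\delta p_T(x,v)=\chi(x)\big(\eta_1(v)-\eta_2(v)\big)$, where $\chi$ is an arbitrary bounded test function and $\eta_1,\eta_2$ are arbitrary probability densities. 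These give $\int\chi(x)\big(\int\Lambda_T(x,v)(\eta_1-\eta_2)(v)\,{\rm d}v\big)\,{\rm d}x=0$. Varying $\chi$, then letting $\eta_1,\eta_2$ approximate Dirac masses at Lebesgue points, yields $\Lambda_T(x,v_1)=\Lambda_T(x,v_2)$ for a.e.\ $(x,v_1,v_2)$, which is \eqref{eq:Lambda_T}. No positivity restriction is needed at $T$, because $\Lambda_T$ is multiplied by $\delta p_T$ linearly and the sign of $p_T^\star$ does not interfere. This matches the statement holding for a.e.\ $(x,v)$ without qualification.

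\emph{Step 2 (initial condition).} The $p_0$-dependent part of $\mathcal L$ is ${\rm KL}(p_0,\gamma_0)-\int p_0\Lambda_0$. Its Gateaux derivative at $p_0^\star$ in direction $\delta p_0$ is
\[
\int\Big(\log\tfrac{p_0^\star}{\gamma_0}+1-\Lambda_0\Big)\delta p_0=\int\xi_0\,\delta p_0 ,
\]
with $\xi_0$ as defined in \eqref{eq:quanti1}. To stay admissible we need $p_0^\star+\varepsilon\delta p_0\ge 0$ and $p_0\ll\gamma_0$. We therefore take $\delta p_0$ supported where $p_0^\star>0$ and bounded by a multiple of $p_0^\star$, for instance $\delta p_0=p_0^\star(x,v)\,\chi(x)\big(g(v)-\bar g(x)\big)$. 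Here $g$ is bounded and $\bar g(x)$ is the $p_0^\star(\cdot\,|\,x)$-conditional mean of $g$, which makes the $v$-marginal of $\delta p_0$ vanish. Stationarity gives $\int\chi(x)\,{\rm Cov}_{p_0^\star(\cdot|x)}(\xi_0,g)\,q_0(x)\,{\rm d}x=0$ for all $\chi$ and $g$. Hence, for a.e.\ $x$, $\xi_0(x,\cdot)$ has zero covariance with every bounded $g$ under the conditional law, so it is a.s.\ constant on $\{v:p_0^\star(x,v)>0\}$. This is \eqref{eq:xi_0}.

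\emph{Main obstacle.} The delicate step is Step 2. The KL term blows up at the boundary of the support, so we must justify differentiating under the integral, for example by dominated convergence using the bounded multiplicative perturbations. This is also where integrability of $\xi_0$ against $p_0^\star$ is needed. The measure-theoretic passage from ``zero covariance with all bounded $g$'' to ``$\xi_0$ is a.e.\ constant in $v$'' is routine once measurability and integrability are assumed under the standing regularity hypothesis.
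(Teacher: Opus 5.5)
Your argument is correct and shares the paper's skeleton: endpoint stationarity of \eqref{eq:Lagrangian2} along variations with zero $v$-marginal. You finish differently, though. The paper identifies the admissible variations with the null space $\mathcal N(\pi)$ of the marginal map $\pi:L^1(\mathbb R^{2d})\to L^1(\mathbb R^d)$ and invokes the duality $\mathcal N(\pi)^\perp=\overline{\mathcal R(\pi^\dagger)}$. Computing $(\pi^\dagger g)(x,v)=g(x)$ then shows, for both endpoints in one stroke, that $\xi_0$ and $\Lambda_T$ are $v$-independent. You replace this functional-analytic step with explicit test perturbations, concluding via Lebesgue points at $T$ and via vanishing conditional covariances at $t=0$.

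Your route is more elementary, and at $t=0$ it is more careful than the paper's. The paper tacitly treats all of $\mathcal N(\pi)$ as admissible and regards $\xi_0$ as an element of $L^\infty(\mathbb R^{2d})$. Yet $\xi_0$ is only defined where $p_0^\star>0$, and $\log(p_0^\star/\gamma_0)$ need not be bounded. Your multiplicative perturbations $p_0^\star\chi(x)\bigl(g(v)-\bar g(x)\bigr)$ do three things: they preserve $p_0\ge 0$ and $p_0\ll\gamma_0$; they justify differentiating the KL term by dominated convergence; and they explain exactly why the conclusion for $\xi_0$ is restricted to $\{p_0^\star>0\}$. On the integrability you flag, finite KL already gives $\log(p_0^\star/\gamma_0)\in L^1(p_0^\star)$, so only integrability of $\Lambda_0$ against $p_0^\star$ has to be assumed. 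The price of your route is measure-theoretic bookkeeping, such as a countable generating family of $g$'s to get a common null set in $x$, and that is indeed routine.

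One correction to Step 1. Linearity of $\int\Lambda_T\,\delta p_T$ does not remove the admissibility requirement $p_T^\star+\varepsilon\,\delta p_T\ge 0$. On a set of positive measure where $p_T^\star=0$, you would only obtain a one-sided (inequality) condition. There are two ways to recover the unqualified a.e.\ claim for $\Lambda_T$. You can adopt the paper's formal convention that the tangent space is all of $\mathcal N(\pi)$. Alternatively, you can repeat your Step 2 construction at $T$, which gives the claim on $\{p_T^\star>0\}$. That set has full measure provided $p_T^\star>0$ a.e., as one expects for this hypoelliptic kinetic evolution at $T>0$ under the standing regularity assumptions.
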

\begin{proof}
The endpoint boundary conditions \eqref{eq:BC1} and \eqref{eq:BC2} define the surfaces $\mathcal S(q_0)$ and $\mathcal S(q_T)$, as in \eqref{eq:surface}. Thus, at the minimizer $t \mapsto (u_t^\star, p_t^\star)$,  it is necessary that $p_0^\star \in \mathcal S(q_0)$ and $p_T^\star \in \mathcal S(q_T)$.

Any admissible variations for the optimal value $p_0^\star$ and $p_T^\star$, namely $\delta p_0^\star$ and $\delta p_T^\star$, must be tangent to $\mathcal S(q_0)$ and $\mathcal S(q_T)$; cf.~\cite[Section 5.1]{kirk2004optimal}.  That is,
 \begin{align}\label{eq:admissible-variations}
     \delta p_0^\star \in T_{p_0^\star} (\mathcal S(q_0)), \quad   \delta p_T^\star \in T_{p_T^\star} (\mathcal S(q_T)).
 \end{align}
 This means
 \begin{align} \label{eq:zero-integral-variations}
     \int_{\mathbb R^{d}} \delta p_0^\star  \, {\rm d} v =   \int_{\mathbb R^{d}} \delta p_T^\star \, {\rm d} v =0.
 \end{align}
 
Let us define the projection mapping
 \begin{align}
    &\pi: L^1(\mathbb R^{2d}) \to L^1(\mathbb R^d), \nonumber \\
 \text{where }  \quad  &\pi (p)(x) = \int_{\mathbb R^{d}} p(x,v) \, {\rm d} v, \label{eq:pi-definition}
\end{align}
which is a bounded linear operator. 
By \eqref{eq:zero-integral-variations}, we have
\begin{align}
    \delta p_0^\star, \delta p_T^\star  \in \mathcal N (\pi),
\end{align}
where $\mathcal N(\pi)$ denotes the null space of $\pi$.

The first-order necessary conditions of optimality for $p_0^\star,p_T^\star$ are defined by the vanishing variation of $\mathcal L$ in \eqref{eq:Lagrangian2} with respect to $p_0,p_T$. These are
\begin{subequations}\label{eq:orthogonality}
\begin{align}
    \int_{\mathbb R^{2d}} \xi_0(x,v) \delta p_0^\star(x,v) \, {\rm d}x \, {\rm d}v =0, \\
  \int_{\mathbb R^{2d}}  \Lambda_T(x,v)  \delta p_T^\star(x,v) \, {\rm d}x \, {\rm d }v =0,
\end{align}
\end{subequations}
where $\xi_0$ is given in \eqref{eq:quanti1} and $\delta p_0^\star$ and $\delta p_T^\star$ are as in \eqref{eq:admissible-variations}. Consequently, the quantities $\xi_0, \Lambda_T$ are \emph{orthogonal} to the admissible variations $ \delta p_0^\star,  \delta p_T^\star$ in the sense of \eqref{eq:orthogonality}.
This means
\begin{align}
    \xi_0 , \Lambda_T \in \overline{\mathcal R(\pi^\dagger)},
\end{align}
where $\overline{\mathcal R(\pi^\dagger)}$ denotes the closure of the range of the adjoint of $\pi$. This adjoint is defined as 
\begin{align}
    \int_{\mathbb R^d} (\pi f)(x) g(x) \, {\rm d}x  = \int_{\mathbb R^{2d}} f (x,v) (\pi^\dagger g)(x,v) \, {\rm d}x \, {\rm d}v, 
\end{align}
for all $f \in L^1(\mathbb R^{2d})$ and $g \in (L^1(\mathbb R^{d}))^\dagger=L^\infty(\mathbb R^{d})$. Equivalently, by \eqref{eq:pi-definition}, 
    \begin{align*}
       \int_{\mathbb R^{2d}} f (x,v) &  \, g(x) \, {\rm d} v \, {\rm d} x \nonumber \\
       &=   \int_{\mathbb R^{2d}} f (x,v) (\pi ^\dagger g)(x,v) \, {\rm d} v  \,  \, {\rm d} x.
    \end{align*}
Since $f$ is arbitrary, the previous equation implies 
\begin{align}
    (\pi^\dagger g)(x,v) = g(x),
\end{align}
and $\mathcal R(\pi^\dagger)$ is the set of bounded measurable functions that are independent of the velocity coordinate.
Then, indeed, $\mathcal R(\pi^\dagger)= \overline{\mathcal R(\pi^\dagger)}$. This completes the proof.
\end{proof}

In essence, the preceding proof relies on the geometry of the endpoint constraints to identify admissible variations of the ensemble density as directions tangent to the corresponding constraint surfaces at the temporal endpoints. Building on this identification and optimal control theory, we deduce that the terminal costate $\Lambda_T$ and the initial-costate-dependent quantity $\xi_0$ lie in the corresponding normal spaces, which implies their independence from the velocity coordinates. Next, we present the structure of the optimal controller for Problem~\eqref{prob:main}. 

\begin{proposition}[Optimal Control Structure]\label{proposition:controller}
Any minimizer $[0,T] \ni t \mapsto (u_t^\star, p_t^\star)$ of Problem~\eqref{prob:main}, whenever it exists, must satisfy
   \begin{align}
   u_t^\star(x,v) &= \sigma^2 \nabla_v \Lambda_t (x,v),  \label{eq:u-interacting-Lambda}
   \end{align} 
   for a.e. $(t,x,v) \in [0,T] \times \mathbb R^{2d}$, at which $p_t^\star(x,v)>0$. Additionally,
   \begin{align} \label{eq:u_T}
     u_T^\star (x,v) =0, 
   \end{align}
   for a.e. $(x,v) \in \mathbb R^{2d}$ at which $p_T^\star(x,v)>0$.
\end{proposition}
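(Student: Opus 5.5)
The plan is to take the first variation of the Lagrangian \eqref{eq:Lagrangian2} with respect to the control $u_t$, holding $p_\cdot$ and $\Lambda_\cdot$ fixed. The only terms in \eqref{eq:Lagrangian2} that involve $u_t$ are
\begin{align*}
\int_0^T \!\!\! \int_{\mathbb R^{2d}} p_t \Big( \frac{1}{2\sigma^2}\Vert u_t\Vert^2 - \big\langle u_t, \nabla_v \Lambda_t\big\rangle \Big) \, {\rm d}x\,{\rm d}v\,{\rm d}t .
\end{align*}
Perturb $u_t^\star$ to $u_t^\star + \varepsilon\, \delta u_t$ with $\delta u_t$ arbitrary, smooth, and compactly supported in $(t,x,v)$. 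Setting the derivative at $\varepsilon=0$ to zero gives
\begin{align*}
\int_0^T \!\!\! \int_{\mathbb R^{2d}} p_t^\star \Big\langle \frac{1}{\sigma^2} u_t^\star - \nabla_v \Lambda_t, \delta u_t \Big\rangle \, {\rm d}x\,{\rm d}v\,{\rm d}t = 0 .
\end{align*}
The fundamental lemma of the calculus of variations then yields $p_t^\star\,(u_t^\star - \sigma^2\nabla_v\Lambda_t)=0$ a.e. Dividing on the set where $p_t^\star>0$ gives \eqref{eq:u-interacting-Lambda}. The dependence of $b_t[p_t]$ on $p_t$ does not enter this step, because $u_t$ and $p_t$ are varied independently in the Lagrangian. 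That dependence would only affect the costate (Hamilton--Jacobi-type) equation obtained from the $p_t$-variation, which is not needed here. Since the integrand is a strictly convex quadratic in $u_t$ pointwise, the stationary point is the pointwise minimizer, which is consistent with the structure.

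For \eqref{eq:u_T}, evaluate \eqref{eq:u-interacting-Lambda} at $t=T$ and invoke Proposition~\ref{proposition:endpoint-conditions}. By \eqref{eq:Lambda_T}, $\Lambda_T(x,v)=\Lambda_T(x)$ for a.e. $(x,v)$, so $\nabla_v\Lambda_T\equiv 0$. Hence $u_T^\star = \sigma^2\nabla_v\Lambda_T = 0$ wherever $p_T^\star>0$.

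The main obstacle is the step from the time-integrated, a.e.-in-$t$ statement \eqref{eq:u-interacting-Lambda} to a pointwise statement at the single time $t=T$, which has measure zero. The paper's standing regularity assumption on $u_\cdot,p_\cdot,\Lambda_\cdot$ handles this. I would assume $u^\star$ and $\nabla_v\Lambda$ are continuous in $t$ up to $T$ and $p^\star_t$ is continuous in $t$. Then \eqref{eq:u-interacting-Lambda} holds at $t<T$ near any point where $p_T^\star>0$, and passing to the limit $t\uparrow T$ gives $u_T^\star=\sigma^2\nabla_v\Lambda_T$ there. A second, milder issue is that the velocity derivative of the a.e.-defined function $\Lambda_T(x)$ must be interpreted in the distributional or weak sense. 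Under the assumed regularity this derivative is zero.
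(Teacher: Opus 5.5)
Your proposal is correct and follows the same route as the paper. The vanishing first variation of \eqref{eq:Lagrangian2} in $u_\cdot$ gives $p_t^\star\,(u_t^\star/\sigma^2-\nabla_v\Lambda_t)=0$, and then \eqref{eq:Lambda_T} from Proposition~\ref{proposition:endpoint-conditions} gives $\nabla_v\Lambda_T=0$, hence \eqref{eq:u_T}. The paper evaluates the a.e.-in-$t$ identity \eqref{eq:u-interacting-Lambda} directly at $t=T$ without comment, so your continuity-in-$t$ argument for that measure-zero time slice fills a step the paper leaves to its standing regularity assumption.
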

\begin{proof}
    The first variation of $\mathcal L$ in \eqref{eq:Lagrangian2} with respect to $u_\cdot$ is given by
\begin{align*}
 \!\!\! \!  \delta \mathcal L (u_\cdot, \delta u_\cdot) = \int_0^T \!\!\!  \int_{\mathbb R^{2d}}  p_t \Big \langle \Big(\frac{u_t}{\sigma^2} - \nabla_v \Lambda_t\Big) , \delta u_t  \Big \rangle  \, {\rm d} x \, {\rm d}  v  \, {\rm d}t.
\end{align*}
At the minimizer, $\delta \mathcal L (u_\cdot^\star, \delta u_\cdot^\star) =0$, which implies \eqref{eq:u-interacting-Lambda}.
Furthermore, by \eqref{eq:u-interacting-Lambda} and the terminal condition \eqref{eq:Lambda_T} in Proposition~\ref{proposition:endpoint-conditions}, we see that $\nabla_v \Lambda_T = 0$. Therefore, Eq.~\eqref{eq:u_T} follows.
\end{proof}

Proposition~\ref{proposition:controller} shows that the optimal control is given by a gradient vector field, much like the structure observed in classical kinematic bridges (see, e.g., \cite{daipra1991new}). However, the gradient is taken with respect to the velocity variable. As the terminal costate is independent of velocity, its velocity gradient vanishes, and hence so does the optimal control at the terminal time. This is a distinct feature of the minimizer for Problem~\eqref{prob:main}. 

\begin{theorem}[Mean-field Schr\"odinger Bridge] \label{thm:main}
   Whenever a minimizer $[0,T] \ni t \mapsto (u_t^\star, p_t^\star)$ of Problem~\eqref{prob:main} exists, there exist $\varphi_t: \mathbb R^{2d} \to \mathbb R_{++}, \hat \varphi_t: \mathbb R^{2d} \to \mathbb R_{+}$ such that
    \begin{subequations}
    \begin{align}
    u_t^\star(x,v) &= \sigma^2 \, \nabla_v \log \varphi_t(x,v), \label{eq:u-interacting-phi} 
\end{align}
for a.e. $(t,x,v) \in [0,T] \times \mathbb R^{2d}$ at which $p_t^\star(x,v)>0$, and
  \begin{align}
    u_T^\star &=0, \label{eq:u-interacting-T}
    \end{align}
    almost everywhere at which $p_T^\star>0$.
Additionally, 
\begin{align}
    p_t^\star (x,v) &= \varphi_t(x,v) \hat \varphi_t(x,v),  \label{eq:pt-phi-hatphi}
\end{align}
for a.e. $(t,x,v) \in [0,T] \times \mathbb R^{2d}$.
\end{subequations}
The pair $(\varphi_t, \hat \varphi_t)$ satisfies  
    \begin{subequations}\label{eq:Schr-sys-main}
        \begin{align} 
    \partial_t \varphi_t + \big \langle v ,  \nabla_x \varphi_t \big \rangle   &+ \big\langle b_t[p_t^\star] , \nabla_v\varphi_t  \big \rangle   \nonumber \\
    &+\frac{\sigma^2}{2} \Delta_v \varphi_t = h_t[p_t^\star] \varphi_t,  \label{eq:varphit}  \\
     \partial_t \hat\varphi_t + \big \langle  v , \nabla_x \hat\varphi_t  \big \rangle  &+ \nabla_v  \cdot (\hat\varphi_t \, b_t[p_t^\star]) \nonumber \\
     &-\frac{\sigma^2}{2} \Delta_v \hat\varphi_t = -  h_t [p_t^\star] \hat\varphi_t, \label{eq:hatvarphit}
\end{align}
where
\begin{align*}
    \!\!\!\! h_t&[p_t^\star](x,v) = \nonumber \\
    & -\int_{\mathbb R^{2d}} \hat \varphi_t(\tilde{x},\tilde{v})  \Big \langle \Psi (\tilde{x}-x, \tilde{v} -v),   \nabla_{\tilde v} \varphi_t(\tilde{x},\tilde{v}) \Big \rangle \, {\rm d}\tilde{x}\: {\rm d}\tilde{v}.
\end{align*}
Eqs.~\eqref{eq:varphit} and \eqref{eq:hatvarphit} are subject to the following boundary conditions.
\begin{align} 
   &\hat \varphi_0(x,v) = \hat\eta_0(x) \gamma_0 (x,v),  \label{eq:BCs-partial-all-1} \\
&\hat \eta_0(x) \int_{\mathbb R^d} \gamma_0(x,v)\varphi_0(x,v)  \, {\rm d} v  = q_0(x),  \label{eq:BCs-partial-all-2} \\
   & \varphi_T(x,v) =\eta_T(x),  \label{eq:BCs-partial-all-3} \\
    &\eta_T(x) \int_{\mathbb R^d}  \hat \varphi_T(x,v) \, {\rm d} v = q_T(x),  \label{eq:BCs-partial-all-4}
\end{align}
for some measurable functions $\hat \eta_0 : \mathbb R^{d} \to \mathbb R_{+}, \eta_T: \mathbb R^{d} \to \mathbb R_{++}$, and a.e. $x,v \in \mathbb R^{d}$.
    \end{subequations}
\end{theorem}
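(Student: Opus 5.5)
Taking Propositions~\ref{proposition:endpoint-conditions} and \ref{proposition:controller} as given, the plan is to derive the full first-order optimality system from the Lagrangian \eqref{eq:Lagrangian2} and then apply a Hopf--Cole transform. First we compute the first variation of $\mathcal L$ with respect to $p_t$ for $t\in(0,T)$. Besides the local terms, the nonlocal force $b_t[p_t]$ in \eqref{eq:convolution-expanded} contributes an extra term. Varying $-\int p_t\langle b_t[p_t],\nabla_v\Lambda_t\rangle$ gives
\[
-\langle b_t[p_t],\nabla_v\Lambda_t\rangle(x,v)-\int_{\mathbb R^{2d}} p_t(\tilde x,\tilde v)\big\langle \Psi(\tilde x-x,\tilde v-v),\nabla_{\tilde v}\Lambda_t(\tilde x,\tilde v)\big\rangle\,{\rm d}\tilde x\,{\rm d}\tilde v ,
\]
where the variables are swapped by Fubini. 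Setting the total variation to zero and substituting $u_t^\star=\sigma^2\nabla_v\Lambda_t$ from \eqref{eq:u-interacting-Lambda} yields a Hamilton--Jacobi--Bellman equation
\[
\partial_t\Lambda_t+\langle v,\nabla_x\Lambda_t\rangle+\langle b_t[p_t^\star],\nabla_v\Lambda_t\rangle+\tfrac{\sigma^2}{2}\|\nabla_v\Lambda_t\|^2+\tfrac{\sigma^2}{2}\Delta_v\Lambda_t=-\mathcal K_t(x,v),
\]
with $\mathcal K_t$ the nonlocal integral above evaluated at $p^\star$. This equation is coupled to the controlled Vlasov--Fokker--Planck equation \eqref{eq:VFP-controlled} with drift $b_t[p_t^\star]+\sigma^2\nabla_v\Lambda_t$.

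Next we set $\varphi_t:=\exp(\Lambda_t)>0$ and $\hat\varphi_t:=p_t^\star/\varphi_t\ge 0$. Then \eqref{eq:u-interacting-phi} and \eqref{eq:pt-phi-hatphi} hold immediately, and \eqref{eq:u-interacting-T} is Proposition~\ref{proposition:controller}. Since $\Delta_v\varphi=\varphi(\Delta_v\Lambda+\|\nabla_v\Lambda\|^2)$, the HJB equation becomes the linear-in-$\varphi$ equation \eqref{eq:varphit} with potential $h_t=-\mathcal K_t$. In $\mathcal K_t$ we write $p^\star\nabla_{\tilde v}\Lambda=\hat\varphi\nabla_{\tilde v}\varphi$, which matches the stated $h_t$. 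For \eqref{eq:hatvarphit}, we substitute $p^\star=\varphi\hat\varphi$ into \eqref{eq:VFP-controlled}, expand the products, and use \eqref{eq:varphit} to cancel the $\varphi$-derivative terms. Dividing by $\varphi$, the cross term $-\sigma^2\nabla_v\cdot(\hat\varphi\nabla_v\varphi)$ combines with $\tfrac{\sigma^2}{2}\Delta_v(\varphi\hat\varphi)$ and the $\tfrac{\sigma^2}{2}\hat\varphi\Delta_v\varphi$ coming from \eqref{eq:varphit}, leaving the backward Kolmogorov adjoint form with $-h_t\hat\varphi_t$ on the right. This is a routine but sign-sensitive computation.

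The boundary conditions come from Proposition~\ref{proposition:endpoint-conditions}. At $t=T$, \eqref{eq:Lambda_T} gives $\varphi_T(x,v)=e^{\Lambda_T(x)}=:\eta_T(x)>0$, which is \eqref{eq:BCs-partial-all-3}. Integrating $p_T^\star=\eta_T\hat\varphi_T$ in $v$ and using \eqref{eq:BC2} gives \eqref{eq:BCs-partial-all-4}. At $t=0$, \eqref{eq:xi_0} says that $\log(p_0^\star/\gamma_0)-\Lambda_0=\xi_0(x)-1$ on $\{p_0^\star>0\}$. Hence $\hat\varphi_0=p_0^\star e^{-\Lambda_0}=e^{\xi_0(x)-1}\gamma_0=:\hat\eta_0(x)\gamma_0$ there, giving \eqref{eq:BCs-partial-all-1}. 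Integrating $p_0^\star=\varphi_0\hat\eta_0\gamma_0$ in $v$ against \eqref{eq:BC1} yields \eqref{eq:BCs-partial-all-2}.

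The main obstacle is the zero set of $p_0^\star$. Off that set, $\hat\varphi_0=0$ while $\gamma_0$ may be positive. One would first try to define $\hat\eta_0(x)=0$ on $x$-fibers where $q_0(x)=0$, since \eqref{eq:BC1} forces $p_0^\star(x,\cdot)=0$ on such fibers. On fibers with $q_0(x)>0$, one would try to show $p_0^\star(x,\cdot)>0$ on $\{\gamma_0>0\}$. The argument is that a KL-type first variation shifting mass into a zero region yields a derivative of $-\infty$ from the $\log$ term, contradicting optimality. A secondary obstacle is justifying the nonlocal variation, which relies on the assumed regularity and on the Lipschitz growth of $\Psi$ for Fubini.
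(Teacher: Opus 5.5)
Your proposal is correct and follows the paper's proof essentially step for step: the nonlocal variation via Fubini producing $h_t$, the HJB equation, the Cole--Hopf substitution $\varphi_t=e^{\Lambda_t}$ with $\hat\varphi_t=p_t^\star/\varphi_t$ and the forward equation recovered from \eqref{eq:VFP-controlled} and \eqref{eq:varphit}, and the boundary conditions read off from Proposition~\ref{proposition:endpoint-conditions}. The zero-set issue you flag is one the paper passes over: it simply sets $\hat\eta_0=e^{-1+\xi_0}$ where $q_0>0$ and zero elsewhere, which implicitly assumes $p_0^\star>0$ wherever $q_0\gamma_0>0$, and your sketched positivity argument is the natural (still formal) way to justify that assumption.
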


\begin{proof}
We addressed optimization over $p_0,p_T$ in Proposition~\ref{proposition:endpoint-conditions} and the optimization over $u_\cdot$ in Proposition~\ref{proposition:controller}. It remains to address optimization over $p_t$, where $t\in (0,T)$. To this end, we proceed by substituting \eqref{eq:u-interacting-Lambda} into \eqref{eq:Lagrangian2}. This yields
\begin{align*}
     \mathcal L &=   {\rm KL}(p_0,\gamma_0) + \int_0^T \!\!\!  \int_{\mathbb R^{2d}} p_t\Big\{ -\partial_t \Lambda_t - \big \langle v , \nabla_x \Lambda_t \big \rangle  \\
     &- \big \langle b_t[p_t], \nabla_v \Lambda_t \big \rangle    -   \frac{\sigma^2}{2} \Vert \nabla_v\Lambda_t  \Vert^2-  \frac{\sigma^2}{2}\Delta_v \Lambda_t \Big\}\, {\rm d} x\: {\rm d} v\: {\rm d}t \nonumber \\
     &+  \int_{\mathbb R^{2d}} \Big(p_T \, \Lambda_T - p_0 \, \Lambda_0 \Big) \,  {\rm d} x \, {\rm d} v.  
\end{align*}
Letting
\begin{align}
     \Upsilon :=  -  \int_0^T \!\!\!\int_{\mathbb R^{2d}} p_t  \big \langle  b_t[p_t], \nabla_v \Lambda_t \big\rangle \, {\rm d} x\: {\rm d} v\: {\rm d}t, \label{eq:upsilon-cs}
\end{align}
the first variation with respect to $p_t$ for $t \in (0,T)$ reads
\begin{align} \label{eq:dLdmu}
\!\!\!    \delta \mathcal L (p_\cdot, &\delta p_\cdot) = \int_0^T \!\!\!  \int_{\mathbb R^{2d}}   \Big\{ -\partial_t \Lambda_t - \big \langle v , \nabla_x \Lambda_t \big \rangle \nonumber \\
     & \qquad -  \frac{\sigma^2}{2} \Vert \nabla_v \Lambda_t  \Vert^2-  \frac{\sigma^2}{2}\Delta_v \Lambda_t \Big\} \delta p_t \, {\rm d} x\: {\rm d} v\: {\rm d}t  \nonumber \\
     & \qquad+ \delta  \Upsilon(p_\cdot, \delta p_\cdot).
\end{align}

Direct computation using \eqref{eq:convolution-expanded} yields 
    \begin{align}\label{eq:intermediate-step-in-optimality}
\!\!\! \delta  \Upsilon(p_\cdot, \delta p_\cdot)  = & \int_0^T \!\!\!  \int_{\mathbb R^{2d}}  \bigg [-\big\langle b_t[p_t]  , \nabla_v \Lambda_t \big \rangle  \nonumber\\
&\qquad\quad+ h_t[p_t](x,v) \bigg]  \delta p_t \, {\rm d} x \:{\rm d} v \, {\rm d} t,
\end{align}
where
\begin{align}\label{eq:qt}
 \!\!\!   h_t&[p_t](x,v) : = \nonumber \\
    & -\int_{\mathbb R^{2d}} p_t(\tilde{x},\tilde{v})  \Big \langle \Psi (\tilde{x}-x, \tilde{v}-v),   \nabla_{\tilde v} \Lambda_t(\tilde{x},\tilde{v}) \Big \rangle \, {\rm d}\tilde{x}\: {\rm d}\tilde{v}.
\end{align}
By substituting \eqref{eq:intermediate-step-in-optimality} into \eqref{eq:dLdmu}, the vanishing first variation of the Lagrangian $\mathcal L$ with respect to $p_t$ with $t\in (0,T)$ yields the Hamilton--Jacobi--Bellman equation
\begin{align}\label{eq:HJB-psi} 
\partial_t \Lambda_t + \big \langle v &, \nabla_x \Lambda_t \big \rangle + \big\langle b_t[p_t^\star] ,\nabla_v \Lambda_t  \big \rangle  \nonumber \\
 &+ \frac{\sigma^2}{2 }\Vert \nabla_v\Lambda_t  \Vert^2+ \frac{\sigma^2}{2}\Delta_v \Lambda_t =   h_t [p_t^\star].
\end{align}

Now, we define the change of variables 
\begin{align}\label{eq:log-transform}
    \Lambda_t(x,v) = \log \varphi_t(x,v),
\end{align}
where $\varphi_t$ is positive. This transformation is known as the \emph{Cole--Hopf} \cite{cole1951quasi,hopf1950partial} or \emph{Fleming logarithmic transformation} \cite{fleming2005logarithmic}. 
Using \eqref{eq:HJB-psi} and the identity
\begin{align}
    \Vert \nabla_v \log \varphi_t \Vert^2+ \Delta_v \log \varphi_t = \frac{\Delta_v \varphi_t}{\varphi_t},
\end{align}
we deduce that $\varphi_t$ in \eqref{eq:log-transform} satisfies \eqref{eq:varphit}.
Further, using \eqref{eq:u-interacting-Lambda} and \eqref{eq:log-transform}, the optimal control can be expressed as \eqref{eq:u-interacting-phi}.

Let us introduce
\begin{align} \label{eq:phi-hatphi}
    \hat \varphi_t(x,v) := \frac{p_t^\star(x,v)}{\varphi_t(x,v)}, \qquad t\in [0,T],
\end{align}
which is equivalent to \eqref{eq:pt-phi-hatphi}.
Then, by Proposition~\ref{proposition:controller}, \eqref{eq:VFP-controlled}, and \eqref{eq:varphit}, Eq.~\eqref{eq:hatvarphit} follows.

By Eqs.~\eqref{eq:quanti1}, \eqref{eq:xi_0} in Proposition~\ref{proposition:endpoint-conditions} together with \eqref{eq:BC1}, \eqref{eq:log-transform}, and \eqref{eq:pt-phi-hatphi}, 
we have
\begin{align*}
    \varphi_0 (x,v) \hat \varphi_0(x,v) = e^{\Lambda_0(x,v)} e^{-1+\xi_0(x)} \gamma_0(x,v).
\end{align*}
Let $\hat \eta_0 (x):= e^{-1+\xi_0(x)}$, whenever $q_0>0$ and zero otherwise. Then, using the transformation in \eqref{eq:log-transform} yields \eqref{eq:BCs-partial-all-1}. Then, by \eqref{eq:phi-hatphi} and \eqref{eq:BC1}, we obtain \eqref{eq:BCs-partial-all-2}. 
Analogously, by Eq.~\eqref{eq:Lambda_T} in Proposition~\ref{proposition:endpoint-conditions} and \eqref{eq:log-transform}, we have
\begin{align} \label{eq:varphi_T}
     \varphi_T (x,v) = e^{\Lambda_T(x)}.
\end{align}
Letting $\eta_T (x):= e^{\Lambda_T(x)}$ yields \eqref{eq:BCs-partial-all-3}. By \eqref{eq:phi-hatphi} and \eqref{eq:BC2}, we recover \eqref{eq:BCs-partial-all-4}.  Lastly, \eqref{eq:u-interacting-phi} and \eqref{eq:varphi_T} imply \eqref{eq:u-interacting-T}, which is consistent with Proposition~\ref{proposition:controller}.  This concludes the proof.
\end{proof}

Theorem~\ref{thm:main} indicates that the running probability density of the interacting agents is influenced by information from both forward and backward time directions. In that, the density factors into a forward-diffusing quantity $\hat \varphi_t$ and a backward-diffusing one $\varphi_t$, both determined by an interacting analogue of the classical Schr\"odinger system \cite{schrodinger1931}, \cite[Section~5]{chetrite2021schrodinger}, the system of equations in \eqref{eq:Schr-sys-main}. This system is nonlinear and coupled, both nonlocally and in time. Eqs.~\eqref{eq:varphit} and \eqref{eq:hatvarphit} are integro-PDEs that depend nonlocally on $p_t^\star$, which itself depends on the pair $(\varphi_t, \hat \varphi_t)$. In addition to coupling at the temporal boundaries via \eqref{eq:BCs-partial-all-1}--\eqref{eq:BCs-partial-all-4}, the reaction rate $h_t[p_t^\star]$ in \eqref{eq:varphit}, \eqref{eq:hatvarphit}  depends explicitly on $\hat \varphi_t$ and $\nabla_{\tilde{v}} \varphi_t$. Consequently, the system is coupled throughout the interval $[0,T]$, and the quantities $\varphi_t, \hat \varphi_t, p_t^\star$ cannot be solved for independently. 

In the next section, we propose an algorithm that alternates between computing these quantities, while mitigating the involved nonlinearities. The algorithm can be viewed as a generalization of the well-known Fortet-Sinkhorn algorithm that is used to solve the classical Schr\"odinger system \cite{fortet1940resolution,sinkhorn1964relationship}.

\section{Computational Approach} \label{sec:algorithm}
Our proposed computational approach is presented in Algorithm~\ref{alg-full-partial}, comprising nested fixed-point iterations (indexed by $j,k$).
The initial guesses $p_\cdot^{(0)}$, $\varphi_\cdot^{(0)}$, and $\hat{\varphi}_\cdot^{(0)}$ may be taken as the solution to the corresponding non-interacting second-order Schr\"odinger bridge problem, obtained by setting the mean-field force and reaction rate in \eqref{eq:Schr-sys-main} to zero. In this case, the resulting terminal multiplier is automatically $\eta_T^{[0]}(x) =\varphi_T^{(0)}(x,\cdot)$.

In Algorithm~\ref{alg-full-partial}, the inner iteration, indexed by $j$, solves \eqref{eq:Schr-sys-main} with $p_t = p_t^{(k)}$ being fixed, as well as a reaction rate determined via delayed evaluation. Consequently, this step involves propagating the solution of the backward-forward linear Kolmogorov equations in time. 
Upon completion of the inner iteration, the outer loop updates $t \mapsto p_t^{(k)}$ according to \eqref{eq:iteration-nl2}, via a damped evaluation if found necessary.

The convergence is taken in reference to the Hilbert projective metric $d_H(\cdot, \cdot)$ \cite{bushell1973hilbert,georgiou2015positive}. The metric between two measurable functions $f,g: \mathbb R^{2d} \to \mathbb R_{+}$ that share the same essential support $\Xi$ is defined as
\begin{align*}
d_H(f,g) := \log \sup_{(x,v) \in  \Xi}\frac{f(x,v)}{g(x,v)}- \log \inf_{(x,v) \in \Xi} \frac{f(x,v)}{g(x,v)},
\end{align*} 
with the extremal bounds obtained in the essential sense. 
\begin{algorithm}
\caption{Second-Order Mean-Field Sinkhorn}
\label{alg-full-partial}
\begin{algorithmic}[1]
    \Require $\Psi, \sigma, T ,q_0$, $q_T$, $\gamma_0$, $\varphi_\cdot^{(0)}$ such that $\varphi_T^{(0)}$ is independent of $v$, $\hat\varphi_\cdot^{(0)}$, $p_\cdot^{(0)}$, $N_1,N_2$, \texttt{tol}, $\theta \in (0,1]$.
   \For{$k = 0, \, \ldots, N_1$}
        \State Set $\varphi_\cdot^{[0]} \leftarrow  \varphi_\cdot^{(k)}$, $\hat \varphi_\cdot^{[0]}  \leftarrow \hat \varphi_\cdot^{(k)}$, $\eta_T^{[0]}(x) \leftarrow \varphi_T^{(k)}(x, \cdot)$.
        \State Compute $b_\cdot[p_\cdot^{(k)}]$.
         \For{$j = 0, \, \ldots, N_2$}
    \State Compute  \begin{align*}
        &h_t^{[j]}(x,v) =  \nonumber \\
       &- \int_{\mathbb R^{2d}} \hat \varphi_t^{[j]}(\tilde{x},\tilde{v})  \Big \langle \Psi (\tilde{x}-{x}, \tilde{v} -v),   \nabla_{\tilde v} \varphi_t^{[j]}(\tilde{x},\tilde{v}) \Big \rangle \, {\rm d}\tilde{x}\: {\rm d}\tilde{v}.
    \end{align*}
     \State Integrate
    \begin{subequations}
     \begin{align}\label{eq:backward-alg-full-partial}
       \partial_t \varphi_t^{[j+1]}&= - \langle v , \nabla_x  \varphi_t^{[j+1]} \rangle- \langle b_t(p_t^{(k)}) , \nabla_v \varphi_t^{[j+1]} \rangle \nonumber \\
       &-\frac{\sigma^2}{2} \Delta_v  \varphi_t^{[j+1]} +h_t^{[j]}(x,v)  \varphi_t^{[j+1]}
     \end{align}
    backward in time  from the terminal condition $\varphi_T^{[j+1]}(x,v) = \eta_T^{[j]}(x)$.
\State Compute
 \begin{align}
      \hat \eta_0^{[j+1]} (x) &=   \frac{q_0(x)} {\int \gamma_0(x,v)  \varphi_0^{[j+1]}(x,v) {\rm d}v}.
 \end{align}
\State Integrate
              \begin{align}\label{eq:forward-alg-full-partial}
          \partial_t \hat\varphi_t^{[j+1]}  &=- \langle v , \nabla_x \hat\varphi_t^{[j+1]} \rangle -  \nabla_v \cdot(b_t(p_t^{(k)})\, \hat\varphi_t^{[j+1]}) \nonumber \\
          &+\frac{\sigma^2}{2} \Delta_v \hat\varphi_t^{[j+1]} -h_t^{[j]}  \hat\varphi_t^{[j+1]}
    \end{align}
 forward in time from the initial condition $$\hat \varphi_0^{[j+1]}(x,v) = \hat \eta_0^{[j+1]}(x) \gamma_0(x,v).$$
 \State    Compute
     \begin{align}
           \eta_T^{[j+1]} (x) =  \frac{q_T(x)}{\int\hat\varphi_T^{[j+1]}\, {\rm d}v}.
     \end{align}
      \end{subequations}
      \State Set  $\varphi_T^{[j+1]}(x,\cdot)\leftarrow\eta_T^{[j+1]}(x)$.
  \If{
\begin{align*}
\sup_{t \in [0,T]} \max\Big\{   d_H\!\left( \varphi_t^{[j+1]}, \varphi_t^{[j]} \right), d_H \!\left( \hat\varphi_t^{[j+1]}, \hat\varphi_t^{[j]} \right) \Big\} < \texttt{tol}
\end{align*}
} 
     \State \textbf{break}
        \EndIf
    \EndFor
     \State Set
    $\varphi_\cdot^{(k+1)}\leftarrow\varphi_\cdot^{[j+1]}$ and $\hat\varphi_\cdot^{(k+1)}\leftarrow\hat\varphi_\cdot^{[j+1]}$.
 \State Update
    \begin{align}
    p_\cdot^{(k+1)} = \theta\varphi_\cdot^{(k+1)}\hat\varphi_\cdot^{(k+1)} +(1-\theta)p_\cdot^{(k)}.    \label{eq:iteration-nl2}
    \end{align}
    \If{$
   \sup_{t \in [0,T]} d_H\!  \left(p_t^{(k+1)},p_t^{(k)}\right) <\texttt{tol} $}
    \State \textbf{break}
    \EndIf
    \EndFor
    \State \Return $\varphi_\cdot^{(k+1)}$, $\hat\varphi_\cdot^{(k+1)}$, and $p_\cdot^{(k+1)}$.
      \end{algorithmic}
\end{algorithm}

\section{Numerical Example}\label{sec:Ex}
\begin{figure}[ht]
    \centering
     \begin{subfigure}[b]{0.49\textwidth}
        \centering
        \includegraphics[width=\textwidth]{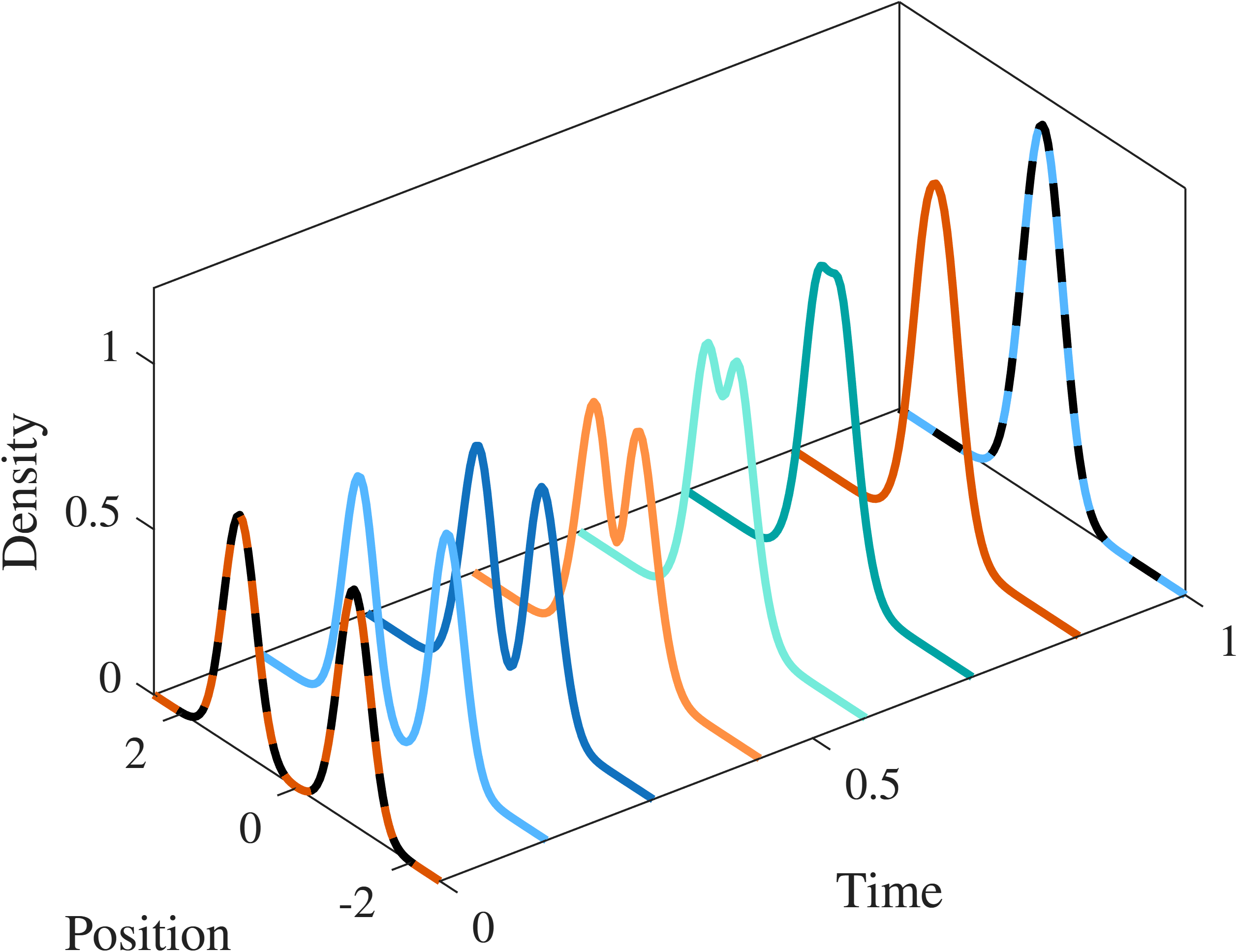}
         \vspace{10pt}
        \caption{Numerically computed spatial densities over time (solid curves) and the specified endpoint marginals $q_0$ and $q_T$ (dashed curves).}
        \label{fig:sub-first}
    \end{subfigure}
    \hfill 
    \vspace{10pt}
    \begin{subfigure}[b]{0.49\textwidth}
        \centering
        \includegraphics[width=\textwidth]{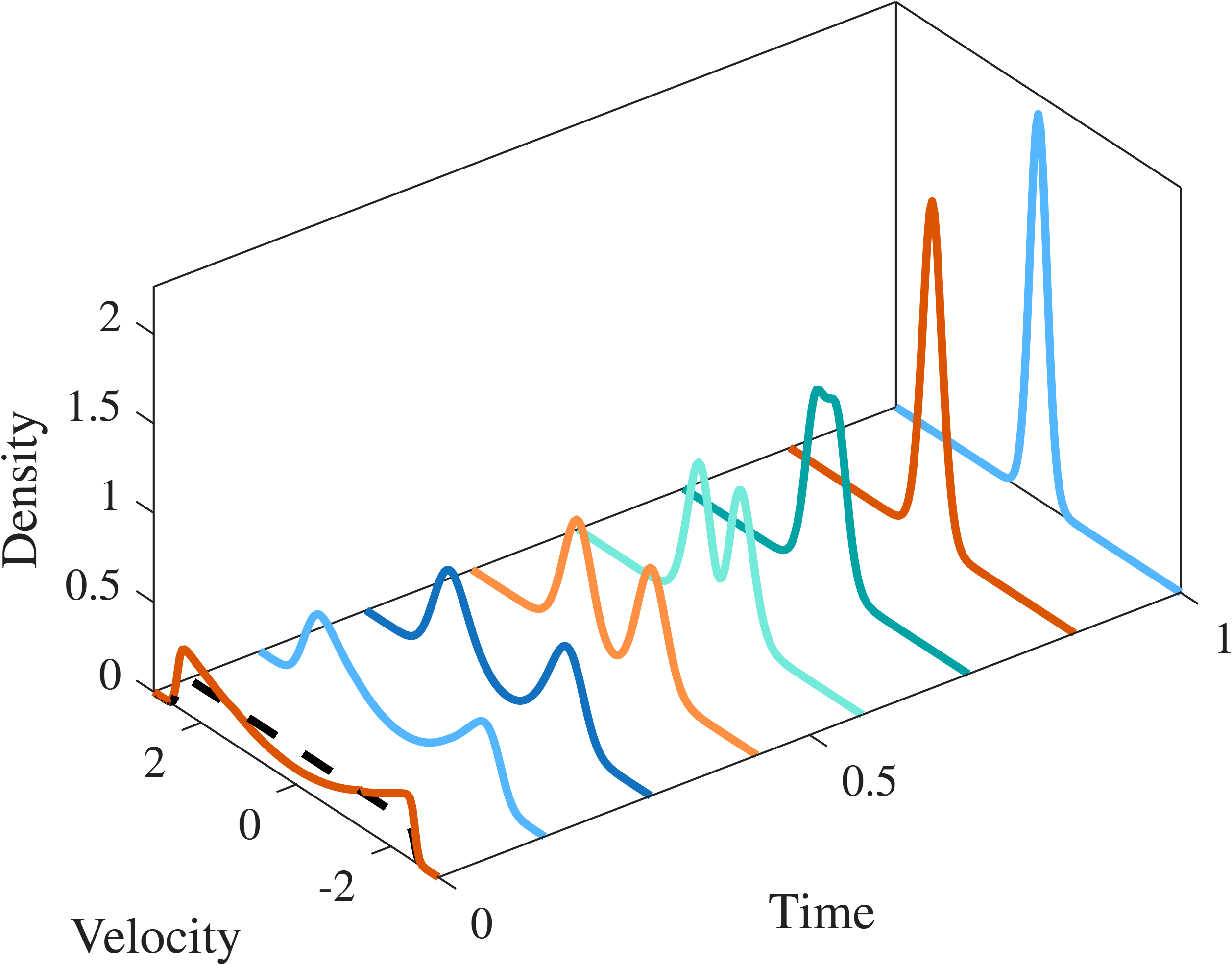}
         \vspace{10pt}
        \caption{Numerically computed velocity densities over time (solid curves) and the initial prior velocity marginal $\int \gamma_0 \, {\rm d}x$ (dashed curve).}
        \label{fig:sub-second}
    \end{subfigure}
    \caption{Results of the numerical example in Section~\ref{sec:Ex}.}
    \label{fig:main-cluster}
\end{figure}
Herein, we present an illustrative example of steering the ensemble dynamics given by \eqref{eq:general-interacting kinetics}, with $\sigma=0.5$, $T =1$, and
\begin{align*}
    \Psi(x,v) = 5 x \exp\Big({-\frac{x^2}{2(0.45)^2}}\Big) - 6 v \exp\Big({-\frac{x^2}{2(0.8)^2}} \Big),
\end{align*}
on the computational domain $x \in [-3,3] , v \in [-6,6]$.
The first term in this interaction kernel models positional repulsion, pushing nearby agents apart, and the second term represents velocity alignment, discouraging large velocity differences among neighboring agents.

We consider the prespecified spatial PDFs 
\begin{align*}
    q_0(x) &\propto  \frac{1}{2} \exp \Big(- \frac{(x-1)^2}{2(0.28)^2} \Big) +  \frac{1}{2} \exp \Big(- \frac{(x+1)^2}{2(0.28)^2} \Big),\\
    q_T (x)  &\propto   \exp \Big(- \frac{x^2}{2(0.35)^2} \Big).
\end{align*}
These PDFs are depicted in Fig.~\ref{fig:sub-first} with dashed curves. Additionally, we define the initial prior phase space distribution as
\begin{align*}
    \gamma_0(x,v) \propto q_0(x) \Big[ \tanh \Big(\frac{v+2.5}{0.08}\Big) -  \tanh \Big(\frac{v-2.5}{0.08}\Big) \Big],
\end{align*}
representing an approximately uniform velocity distribution over the interval $[-2.5, 2.5]$. Fig.~\ref {fig:sub-second} shows the velocity marginal of $\gamma_0$ as a dashed curve. 

Applying Algorithm~\ref{alg-full-partial} with $\texttt{tol}= 1 \times 10^{-5}$ and $\theta=1$ (no damping) yields the time-resolved spatial and velocity densities displayed in Figs.~\ref{fig:sub-first} and \ref{fig:sub-second}. As demonstrated, the resulting spatial marginals indeed match the prescribed ones at the temporal endpoints. The obtained solution corresponds to velocity marginals that gradually concentrate, steering the position dynamics toward the target distribution.

\section{Concluding remarks}

 This work has relied on Schr\"odinger's dictum of bridges to synthesize suitable collective control policies when sensing of an interacting ensemble is limited to partial, aggregate information. Specifically, we considered assimilating spatial snapshot data and inertial mean-field dynamics into the bridge formulation. We have identified the structure of the optimal policy and the resulting controlled evolution and proposed a nested fixed-point approach to compute the corresponding optimality system. 

Future research directions include uncertainty in interaction mechanisms. Another direction is to extend the present framework to entail time-resolved distributions (a multi-marginal formulation). This is particularly relevant to scenarios where time series of population snapshots are available, such as in trajectory inference \cite{chizat2022trajectory,pmlr-v267-hong25f}.

\balance
\bibliographystyle{IEEEtran}
\bibliography{main.bib}

\end{document}